\begin{document}

\newtheorem{theorem}{Theorem}    
\newtheorem{proposition}[theorem]{Proposition}
\newtheorem{conjecture}[theorem]{Conjecture}
\def\theconjecture{\unskip}
\newtheorem{corollary}[theorem]{Corollary}
\newtheorem{lemma}[theorem]{Lemma}
\newtheorem{sublemma}[theorem]{Sublemma}
\newtheorem{observation}[theorem]{Observation}
\theoremstyle{definition}
\newtheorem{definition}{Definition}
\newtheorem{notation}[definition]{Notation}
\newtheorem{remark}[definition]{Remark}
\newtheorem{question}[definition]{Question}
\newtheorem{questions}[definition]{Questions}
\newtheorem{example}[definition]{Example}
\newtheorem{problem}[definition]{Problem}
\newtheorem{exercise}[definition]{Exercise}

\numberwithin{theorem}{section}
\numberwithin{definition}{section}
\numberwithin{equation}{section}

\def\earrow{{\mathbf e}}
\def\rarrow{{\mathbf r}}
\def\uarrow{{\mathbf u}}
\def\varrow{{\mathbf V}}
\def\tpar{T_{\rm par}}
\def\apar{A_{\rm par}}

\def\reals{{\mathbb R}}
\def\torus{{\mathbb T}}
\def\heis{{\mathbb H}}
\def\integers{{\mathbb Z}}
\def\naturals{{\mathbb N}}
\def\complex{{\mathbb C}\/}
\def\distance{\operatorname{distance}\,}
\def\support{\operatorname{support}\,}
\def\dist{\operatorname{dist}\,}
\def\Span{\operatorname{span}\,}
\def\degree{\operatorname{degree}\,}
\def\kernel{\operatorname{kernel}\,}
\def\dim{\operatorname{dim}\,}
\def\codim{\operatorname{codim}}
\def\trace{\operatorname{trace\,}}
\def\Span{\operatorname{span}\,}
\def\dimension{\operatorname{dimension}\,}
\def\codimension{\operatorname{codimension}\,}
\def\nullspace{\scriptk}
\def\kernel{\operatorname{Ker}}
\def\ZZ{ {\mathbb Z} }
\def\p{\partial}
\def\rp{{ ^{-1} }}
\def\Re{\operatorname{Re\,} }
\def\Im{\operatorname{Im\,} }
\def\ov{\overline}
\def\eps{\varepsilon}
\def\lt{L^2}
\def\diver{\operatorname{div}}
\def\curl{\operatorname{curl}}
\def\etta{\eta}
\newcommand{\norm}[1]{ \|  #1 \|}
\def\expect{\mathbb E}
\def\bull{$\bullet$\ }

\def\xone{x_1}
\def\xtwo{x_2}
\def\xq{x_2+x_1^2}
\newcommand{\abr}[1]{ \langle  #1 \rangle}

\newcommand{\Norm}[1]{ \left\|  #1 \right\| }
\newcommand{\set}[1]{ \left\{ #1 \right\} }
\def\one{\mathbf 1}
\def\whole{\mathbf V}
\newcommand{\modulo}[2]{[#1]_{#2}}

\def\scriptf{{\mathcal F}}
\def\scriptg{{\mathcal G}}
\def\scriptm{{\mathcal M}}
\def\scriptb{{\mathcal B}}
\def\scriptc{{\mathcal C}}
\def\scriptt{{\mathcal T}}
\def\scripti{{\mathcal I}}
\def\scripte{{\mathcal E}}
\def\scriptv{{\mathcal V}}
\def\scriptw{{\mathcal W}}
\def\scriptu{{\mathcal U}}
\def\scriptS{{\mathcal S}}
\def\scripta{{\mathcal A}}
\def\scriptr{{\mathcal R}}
\def\scripto{{\mathcal O}}
\def\scripth{{\mathcal H}}
\def\scriptd{{\mathcal D}}
\def\scriptl{{\mathcal L}}
\def\scriptn{{\mathcal N}}
\def\scriptp{{\mathcal P}}
\def\scriptk{{\mathcal K}}
\def\frakv{{\mathfrak V}}
\def\Rn{{\mathbb{R}^n}}
\author{Juan Zhang}
\address{Juan Zhang \\
         School of Mathematical Sciences \\
         Beijing Normal University \\
         Laboratory of Mathematics and Complex Systems \\
         Ministry of Education \\
         Beijing 100875 \\
         People's Republic of China}
\email{zhangjuanw@mail.bnu.edu.cn}
\author{Hiroki Saito}
\address{College of Science and Technology\\ Nihon University\\
Narashinodai 7-24-1\\ Funabashi City, Chiba, 274-8501\\ Japan}
\email{saitou.hiroki@nihon-u.ac.jp}

\author{Qingying Xue}
\address{Qingying Xue
\\
School of Mathematical Sciences
\\
Beijing Normal University
\\
Laboratory of Mathematics and Complex Systems
\\
Ministry of Education
\\
Beijing 100875
\\
People's Republic of China
}
\email{qyxue@bnu.edu.cn}


\thanks{The third author was supported partly by NSFC
(No. 11471041, 11671039), the Fundamental Research Funds for the Central Universities (No. 2014KJJCA10) and NCET-13-0065\\
Corresponding author: Qingying Xue, Email: qyxue@bnu.edu.cn}

\subjclass[2010]{Primary 42B20; Secondary 47G10}

\keywords{Multilinear strong maximal function, Fefferman-Stein type inequality, Young function, multiple weights.}

\date{June 30, 2017.}

\title
[The Fefferman-Stein type inequality]
{The Fefferman-Stein type inequalities for the multilinear strong maximal functions\\}

\maketitle

\begin{abstract}Let $\vec{\omega}=( \omega_{1},...,\omega_{m})$ be a multiple weight and  $\{\Psi_{j}\}^{m}_{j=1}$ be a sequence of Young functions. Let $\mathcal{M}_{\mathcal{R}}^{\vec{\Psi}}$ be the multilinear strong maximal function with Orlicz norms which is defined by
$$\mathcal{M}_{\mathcal{R}}^{\vec{\Psi}}(\vec{f})(x)=\sup_{R\in \mathcal{R},R\ni x}\prod^{m}_{j=1}\|f_{j}\|_{\Psi_{j},R}$$
where the supremum is taken over all rectangles with sides parallel to the coordinate axes. If $\Psi_j(t)=t$, then $\mathcal{M}_{\mathcal{R}}^{\vec{t}}$ coincides with the multilinear strong maximal function $\mathcal{M}_{\mathcal{R}}$ defined and studied by Grafakos et al. In this paper, we first investigated the Fefferman-Stein type inequality for $\mathcal{M}_{\mathcal{R}}^{\vec{\Psi}}$ when $\vec{\omega}$ satisfies the $A_{\infty,\mathcal{R}}$ condition. Then, for arbitrary $\vec{\omega}\geq 0$( each $ \omega_{j}\ge 0$), the Fefferman-Stein type inequality for  the multilinear strong maximal function $\mathcal{M}_{\mathcal{R}} $ associated with rectangles will be given.
\end{abstract}
\section{Introduction}

\subsection{Hardy-Littlewood and strong maximal functions}

Let $f$ be a locally integrable function defined on $\mathbb{R}^{n}$ and $\mathcal{Q}$ be the family of all cubes in $\mathbb{R}^{n}$ with sides parallel to the coordinate axes. Let ${M}$ be the classical Hardy-Littlewood maximal function defined by
\begin{equation}{M}f(x)=\sup_{Q\in\mathcal{Q},Q\ni x}\frac{1}{|Q|}\int_{Q}|f(y)|dy.\label{HL}\end{equation}
 It  was well known that ${M}$ is of weak type $(1,1)$ and strong type $(p,p)$ for $p>1$. Moreover, for arbitrary weight $\omega$, it was shown by Fefferman and Stein \cite{FS} that ${M}$ enjoys the following property:\begin{equation}\omega(\{x\in \mathbb{R}^{n}:{M}f(x)>t\})\leq \frac{C}{t}\|f\|_{L^{1}(\mathbb{R}^{n},{M}\omega)},\quad  t>0.\label{FS1}\end{equation}
 By interpolation, it gives immediately that
\begin{equation}\|{M}f\|_{L^{p}(\mathbb{R}^{n},\omega)}\leq C\|f\|_{L^{p}(\mathbb{R}^{n},{M}\omega)}, \quad p>1.\label{FS2}\end{equation}
Inequalities in (\ref{FS1}) and (\ref{FS2}) are all called the Fefferman-Stein type inequalities.

Instead of cubes, more general geometry structure has been assigned to the operator ${M}$. For example, if the family $\mathcal{Q}$ is replaced by $\mathcal{R}$, the family of all rectangles in $\mathbb{R}^{n}$ with sides parallel to the coordinate axes, then the maximal function becomes the well known strong maximal function as follows :
$${M}_{\mathcal{R}}f(x)=\sup_{R\in\mathcal{R},R\ni x}\frac{1}{|R|}\int_{R}|f(y)|dy.$$ In 1935, a maximal
theorem for ${M}_{\mathcal{R}}$ was given by Jessen, Marcinkiewicz and Zygmund  \cite{JMZ}. They showed that ${M}_{\mathcal{R}}$ is not of weak
type $(1,\,1)$, which is quite different from the properties of the classical Hardy-Littlewood maximal function. As a replacement of the weak
$(1,\,1)$ estimate, it was demonstrated in \cite{JMZ} that ${M}_{\mathcal{R}}$ enjoys the following end-point behavior property:
\begin{equation}\label{endpoint-JMZ}
\big|\{x \in \Rn; M_{\mathcal{R}}f(x)>\lambda \}\big|
\lesssim_{n} \int_{\Rn} \frac{|f(x)|}{\lambda} \left(1 + \Big(\log^+ \frac{|f(x)|}{\lambda}\Big)^{n-1}\right) dx.
\end{equation}
In 1975, C\'{o}rdoba and Fefferman \cite{CF} gave a geometric proof of $(\ref{endpoint-JMZ})$ and established a covering lemma for rectangles. Their covering lemma is quite useful by the reason that it overcomes the failure of the Besicovitch covering argument for rectangles with arbitrary eccentricities. Subsequently, achievements have been made to obtain the corresponding weighted version of $(\ref{endpoint-JMZ})$. Among those achievements are the nice works of Bagby and Kurtz \cite{BK}, Capri and Guti\'{e}rrez \cite{CG}, Mitsis \cite{M}, Luque and Parissis \cite{LP}. In \cite{LP}, Luque and Parissis formulated a weighted version of C\'{o}rdoba-Fefferman's covering lemma and showed that the following weighted inequality holds for $\omega\in A_{\infty, \mathcal{R}}$:
\begin{equation}\label{Weighted}
\omega\big(\{x \in \Rn; M_{\mathcal{R}}f(x)>\lambda \}\big)
\lesssim_{\omega,n} \int_{\Rn} \frac{|f(x)|}{\lambda} \left(1 + \Big(\log^+ \frac{|f(x)|}{\lambda}\Big)^{n-1}\right) M_{\mathcal{R}} \omega(x) dx.
\end{equation}
Recently, for $n=2$, the condition $\omega\in A_{\infty, \mathcal{R}}$ in (\ref{Weighted}) was extended to any weight $\omega\ge 0$ by Saito and Tanaka \cite{ST} as follows:
$$\omega(\{x\in \mathbb{R}^{2}:{M}_{\mathcal{R}}f(x)>t\})\leq C\int_{\mathbb{R}^{2}}\frac{|f(x)|}{t}\left(1+\log^{+}\frac{|f(x)|}{t}\right)W(x)dx, t>0,$$
where $W={M}_{\mathcal{R}}{M}_{\mathcal{Q}}\omega$ and the constant $C>0$ does not depend on $\omega$ and $f$.

Still more recently, Tanaka \cite{T} further essentially extended the results in \cite{ST} to higher dimensions.
We summarize the results in  \cite{T} as follows:

{\noindent \textbf {Theorem A. }(\cite{T}). {\it {For $p>1$ and any weight $\omega$ defined on $\mathbb{R}^{n}$, there exists a constant $C>0$ which does not depend on $\omega$ and $f$, such that the following inequality holds
\begin{equation}\label{ta}
\omega(\{x\in \mathbb{R}^{n}:{M}_{\mathcal{R}}f(x)>t\})^{1/p}\leq \frac{C}{t}\|f\|_{L^{p}(\mathbb{R}^{n},W)}, \quad \hbox{for\ }t>0
\end{equation}where $W={M}_{\mathcal{R}}{M}_{\mathcal{R}}^{n-1}...{M}_{\mathcal{R}}^1\omega$ and $M_{\mathcal{R}}^c$ $(c=1,...,n-1)$ is the strong maximal operator with the complexity $c$ defined in Section \ref{NP}. }}

\subsection{Multilinear strong maximal functions}

In order to state more clearly, we first introduce one definition.
\begin{definition} [\bf Multilinear strong maximal function with Orlicz norms, \cite{LL}] Let $\vec{f}=(f_{1},..., f_{m})$ be an $m$-dimensional vector of locally integrable functions. The multilinear strong maximal function with Orlicz norms is defined by
$$\mathcal{M}_{\mathcal{R}}^{\vec{\Psi}}(\vec{f})(x)=\sup_{R\in \mathcal{R},R\ni x}\prod^{m}_{j=1}\|f_{j}\|_{\Psi_{j},R}$$
where $\{\Psi_{j}\}^{m}_{j=1}$ is a sequence of Young functions and the supremum is taken over all rectangles with sides parallel to the coordinate axes.
\end{definition}
\begin{remark}
In particular, if $\Psi_{j}(t)=t$, for all $t\in (0,\infty)$ and all $j\in \{1,...,m\}$, $\mathcal{M}_{\mathcal{R}}^{\vec{\Psi}}$ coincides with the multilinear strong maximal function $\mathcal{M}_{\mathcal{R}}$ introduced and studied by  Grafakos et al. \cite{GLPT} in 2011. The authors \cite{GLPT} demonstrated that $\mathcal{M}_{\mathcal{R}}$ still enjoys a similar endpoint $L\log L $ type estimate as follows:
for any $\lambda > 0$
\begin{equation}\label{Endpoint-Grafakos}
\Big| \big\{x \in \Rn; \mathcal{M}_{\mathcal{R}}(\vec{f})(x) > \lambda^m \big\} \Big|
\lesssim_{m,n} \bigg(\prod_{i=1}^m  \int_{\Rn} \Phi_n^{(m)} \left(\frac{|f_i(y)|}{\lambda}\right) dy\bigg)^{1/m},
\end{equation}
where $\Phi_n(t):=t[1+(\log ^+ t)^{n-1}]$ $(t>0)$ and $\Phi_n^{(m)}$ is
$m$-times compositions of the function $\Phi_n$ with itself.
Furthermore, the exponent is sharp in the sense that we cannot replace
$\Phi_n^{(m)}$ by $\Phi_n^{(k)}$ for $k \leq m-1$.

\end{remark}
This paper will be devoted to investigate the Fefferman-Stein type inequalities for the multilinear strong maximal functions. The first main results of this paper concerns with the multilinear strong maximal functions with Orlicz norms $\mathcal{M}_{\mathcal{R}}^{\vec{\Psi}}$.
\begin{theorem} \label{tm1}
Let $1<p_{1},...,p_{m}<\infty$ such that $\frac{1}{p}=\sum^{m}_{j=1}\frac{1}{p_{j}}.$ Assume that $\mathcal{R}$ is a basis and $\{\Psi_{j}\}^{m}_{j=1}$ is a sequence of Young functions such that $\Psi_{j}\in B^{*}_{p_{j}}$. Let $\vec{\omega}=(\omega_{1},...,\omega_{m})$ and $\nu_{\vec{\omega}}=\prod^{m}_{j=1}\omega_{j}^{p/p_{j}}\in A_{\infty,\mathcal{R}}$, then there exists a constant $C>0$ such that for all nonnegative functions $f$, the following inequality holds
\begin{equation}\label{INEQ}\int_{\mathbb{R}^{n}}[\mathcal{M}_{\mathcal{R}}^{\vec{\Psi}}(\vec{f})(y)]^{p}\nu_{\vec{\omega}}(y)dy\leq C\prod^{m}_{j=1}\|f_{j}\|^{p}_{L^{p_{j}}(\mathcal{M}_{\mathcal{R}}\omega_{j})}.\end{equation}
\end{theorem}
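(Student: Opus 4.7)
The plan is a three-stage reduction: a pointwise domination, H\"older's inequality, and a linear Fefferman--Stein inequality for each $M_{\mathcal{R}}^{\Psi_j}$.

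First I would note the pointwise bound
$$\mathcal{M}_{\mathcal{R}}^{\vec{\Psi}}(\vec{f})(x)\;\le\;\prod_{j=1}^{m}M_{\mathcal{R}}^{\Psi_j}f_j(x),$$
which follows at once because for any rectangle $R\ni x$ and each $j$ the Orlicz average $\|f_j\|_{\Psi_j,R}$ is dominated by $M_{\mathcal{R}}^{\Psi_j}f_j(x)$; taking the product over $j$ and then the supremum over $R\ni x$ gives the claim.

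Next, using the factorization $\nu_{\vec{\omega}}=\prod_j\omega_j^{p/p_j}$, I would raise the pointwise bound to the $p$-th power, integrate against $\nu_{\vec{\omega}}$, and apply H\"older's inequality with exponents $p_j/p$ (which satisfy $\sum_j p/p_j=1$ since $1/p=\sum_j 1/p_j$):
$$\int_{\mathbb{R}^n}\Big(\prod_{j=1}^{m}M_{\mathcal{R}}^{\Psi_j}f_j\Big)^{p}\prod_{j=1}^{m}\omega_j^{p/p_j}\,dy \;\le\; \prod_{j=1}^{m}\left(\int_{\mathbb{R}^n}(M_{\mathcal{R}}^{\Psi_j}f_j)^{p_j}\omega_j\,dy\right)^{p/p_j}.$$
To each factor on the right I would then apply the linear Fefferman--Stein type estimate
$$\int_{\mathbb{R}^n}(M_{\mathcal{R}}^{\Psi_j}f_j)^{p_j}\omega_j\,dy \;\le\; C\int_{\mathbb{R}^n}|f_j|^{p_j}M_{\mathcal{R}}\omega_j\,dy,$$
and combine the resulting bounds to obtain (\ref{INEQ}).

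The main obstacle is establishing that last linear inequality. The hypothesis only places an $A_{\infty,\mathcal{R}}$ condition on the product $\nu_{\vec{\omega}}$ and not on any individual $\omega_j$, so this linear estimate must hold for an arbitrary non-negative weight. Theorem~A shows that for the bare $M_{\mathcal{R}}$ (the case $\Psi(t)=t$) one is forced to put an iterated strong maximal function $W$ on the right-hand side rather than a single $M_{\mathcal{R}}\omega$, so the whole point of the assumption $\Psi_j\in B^*_{p_j}$ (P\'erez's condition characterizing the $L^{p_j}$-boundedness of $M_{\mathcal{R}}^{\Psi_j}$) is to inject enough extra integrability into the Orlicz average $\|\cdot\|_{\Psi_j,R}$ to collapse the iterate $W$ down to $M_{\mathcal{R}}\omega_j$. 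I would prove the linear estimate separately, either via a C\'ordoba--Fefferman type covering argument tailored to Orlicz averages together with a good-$\lambda$ iteration, or by a duality / Carleson embedding based on P\'erez's characterization of $B^*_{p_j}$. Once that linear Fefferman--Stein estimate for $M_{\mathcal{R}}^{\Psi_j}$ is in place, the combination with the first two steps is routine; the $A_{\infty,\mathcal{R}}$ hypothesis on $\nu_{\vec{\omega}}$ is what guarantees that it is this joint weight (and not the individual $\omega_j$'s) against which one integrates the final product, which is exactly the form of (\ref{INEQ}).
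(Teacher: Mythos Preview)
Your reduction has a genuine gap at step three. After the pointwise product bound and H\"older, you are left with the linear inequality
\[
\int_{\mathbb{R}^n}\bigl(M_{\mathcal{R}}^{\Psi_j}f_j\bigr)^{p_j}\omega_j\,dy \;\le\; C\int_{\mathbb{R}^n}|f_j|^{p_j}\,M_{\mathcal{R}}\omega_j\,dy
\]
for an \emph{arbitrary} weight $\omega_j$, and the hypothesis $\nu_{\vec{\omega}}\in A_{\infty,\mathcal{R}}$ has been completely discarded: it imposes nothing on any individual $\omega_j$. Your own heuristic that $B^*_{p_j}$ ``injects enough extra integrability'' cannot be right, because $\Psi(t)=t$ already satisfies $B^*_p$ for every $p>1$ (since $\int_c^\infty(\log t)^{n-1}t^{-p}\,dt<\infty$). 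Thus your linear step, specialized to $\Psi(t)=t$, would assert
\[
\int_{\mathbb{R}^n}(M_{\mathcal{R}}f)^{p}\,\omega \;\le\; C\int_{\mathbb{R}^n}|f|^{p}\,M_{\mathcal{R}}\omega
\]
for all $\omega\ge 0$, which is exactly what Theorem~A and Theorem~\ref{tm2} say is \emph{not} available: for arbitrary weights one needs the iterated operator $W=M_{\mathcal{R}}M_{\mathcal{R}}^{n-1}\cdots M_{\mathcal{R}}^{1}\omega$ on the right. So the linear Fefferman--Stein estimate you propose to prove is false in general, and the scheme cannot close.

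The paper avoids this by using $A_{\infty,\mathcal{R}}$ \emph{before} splitting the weights. It decomposes the left side into level sets of $\mathcal{M}_{\mathcal{R}}^{\vec{\Psi}}$, uses condition~(A) for $\nu_{\vec{\omega}}$ (equivalent to $A_{\infty,\mathcal{R}}$) to run a Jawerth-type selection producing $\lambda$-scattered rectangles $\tilde A_i$, and only then applies H\"older \emph{locally on each rectangle} to write
\[
\frac{\nu_{\vec{\omega}}(\tilde A_i)}{|\tilde A_i|}\le \prod_{j=1}^m\Bigl(\frac{\omega_j(\tilde A_i)}{|\tilde A_i|}\Bigr)^{p/p_j}\le \prod_{j=1}^m\bigl(\inf_{\tilde A_i} M_{\mathcal{R}}\omega_j\bigr)^{p/p_j}.
\]
This pulls $(M_{\mathcal{R}}\omega_j)^{1/p_j}$ inside the Orlicz average, and the scatteredness turns the resulting sum into a Carleson-type average dominated by the \emph{unweighted} $L^{p_1}\times\cdots\times L^{p_m}\to L^p$ bound for $\mathcal{M}_{\mathcal{R}}^{\vec{\Psi}}$ (which is where $B^*_{p_j}$ is actually used). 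In short: the paper spends the $A_{\infty,\mathcal{R}}$ hypothesis on the selection lemma and applies H\"older rectangle-by-rectangle, whereas you apply H\"older globally first and are then stuck needing a linear two-weight inequality that fails.
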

Note that in Theorem \ref{tm1}, we need to assume that $\nu_{\vec{\omega}}\in A_{\infty,R}$. For arbitrary
weights, the methods to establish the Fefferman-Stein type inequalities are quite different from Theorem \ref{tm1}. Moreover, $\mathcal{M}_{\mathcal{R}}\omega_{j}$ in (\ref{INEQ}) will be replaced by more larger maximal functions. For simplicity, we only consider the multilinear strong maximal operator $\mathcal{M}_{\mathcal{R}}.$
\begin{theorem}\label {tm2}
Let $1<p_{1},...,p_{m}<\infty$ and $\sum^{m}_{i=1}\frac{1}{p_{i}}=\frac{1}{p}$.  Let $\vec{\omega}=( \omega_{1},...,\omega_{m})$ and suppose that each $\omega_{j}$ is an arbitrary
weight. Denote by
$W_{j}$=$\mathcal{M} _{\mathcal{R}}\mathcal{M}_{\mathcal {R}}^{n-1}...\mathcal{M}_{\mathcal {R}}^{1}\omega_{j}$ and Set $\nu_{\vec{\omega}}=\prod^{m}_{j=1}\omega_{j}^{{p}/{p_{j}}}$. Then, there exists a positive constant $C$ which does not depend on $\omega_{j}$ and $f_{j}$, such that the following inequality holds
$$\nu_{\vec{\omega}}\left({x\in\mathbb{R}^{n}:\mathcal{M}_{\mathcal{R}} (\vec{f})(x)> t^{m}}\right)^{1/p}\leq \prod^{m}_{j=1}\frac{C}{t}\|f_{j}\|_{L^{p_{j}}(\mathbb{R}^{n},W_{j})}.$$
\end{theorem}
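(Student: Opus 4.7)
The plan is to derive the claimed weak-type estimate by establishing first the corresponding multilinear strong-type inequality $\|\mathcal{M}_{\mathcal{R}}(\vec{f})\|_{L^{p}(\nu_{\vec{\omega}})}\le C\prod_{j}\|f_{j}\|_{L^{p_{j}}(W_{j})}$ and then invoking Markov's inequality at the level $t^{m}$. A direct level-set analysis via the pointwise bound $\mathcal{M}_{\mathcal{R}}(\vec{f})(x)\le\prod_{j}M_{\mathcal{R}}f_{j}(x)$ combined with the factorization $\nu_{\vec{\omega}}=\prod_{j}\omega_{j}^{p/p_{j}}$ would get stuck on off-diagonal contributions of the form $\omega_{i}(\{M_{\mathcal{R}}f_{j}>t\})$ with $i\ne j$, which cannot be controlled by $\|f_{j}\|_{L^{p_{j}}(W_{j})}$ alone.

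The preliminary step is thus to upgrade Theorem~A from weak- to strong-type. Because the weight $W$ built from $\omega$ does not depend on $p$, Theorem~A provides the weak-type bound $M_{\mathcal{R}}\colon L^{p}(W)\to L^{p,\infty}(\omega)$ for every $p>1$ against the same pair of weighted measures $W\,dx$ and $\omega\,dx$. Marcinkiewicz real interpolation between two such weak bounds at exponents $p_{0}<p<p_{1}$ then delivers the strong-type inequality
\begin{equation*}
\|M_{\mathcal{R}}f\|_{L^{p}(\omega)}\le C\|f\|_{L^{p}(W)}
\end{equation*}
for every $p>1$ and every weight $\omega\ge 0$. This is the key conceptual ingredient.

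For the main computation, combine the pointwise bound, the factorization of $\nu_{\vec{\omega}}$, and Hölder's inequality with the exponents $p_{j}/p$ (which sum to $1$ by hypothesis):
\begin{equation*}
\int\mathcal{M}_{\mathcal{R}}(\vec{f})^{p}\,\nu_{\vec{\omega}}\,dx\le\int\prod_{j=1}^{m}\bigl[(M_{\mathcal{R}}f_{j})^{p}\,\omega_{j}^{p/p_{j}}\bigr]\,dx\le\prod_{j=1}^{m}\|M_{\mathcal{R}}f_{j}\|_{L^{p_{j}}(\omega_{j})}^{p}.
\end{equation*}
Inserting the strong-type bound from the previous step gives the multilinear strong estimate $\|\mathcal{M}_{\mathcal{R}}(\vec{f})\|_{L^{p}(\nu_{\vec{\omega}})}\le C^{m}\prod_{j}\|f_{j}\|_{L^{p_{j}}(W_{j})}$, and Markov's inequality applied at the threshold $t^{m}$ produces the stated weak-type inequality (after absorbing $C^{m}$ into the constant). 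The only non-routine ingredient is the weak-to-strong upgrade via Marcinkiewicz interpolation; the remaining steps constitute a routine multilinear Hölder bookkeeping exercise.
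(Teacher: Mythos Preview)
Your proof is correct and follows a genuinely different, more elementary route than the paper. The paper argues by induction on the complexity $c$ of the rectangles directly in the multilinear setting: the base case $c=1$ is the multilinear Fefferman--Stein inequality for cubes (Lemma~4.1, taken from \cite{LOPTT}), and the inductive step runs a C\'ordoba--Fefferman type selection of rectangles to show that the level set $\{\mathcal{M}_{\mathcal{R}}^{c}(\vec{f})>t^{m}\}$ is contained in $\{\mathcal{M}_{\mathcal{R}}^{c-1}(\mathbf{1}_{\Omega},\ldots,\mathbf{1}_{\Omega})>2^{-m}\}$ for a sparse union $\Omega$ of selected rectangles, then closes with disjointness and the $L^{p}$-boundedness of $M_{\mathcal{R}}$. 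You instead treat Theorem~A as a black box and reduce the multilinear estimate to the linear one via the pointwise domination $\mathcal{M}_{\mathcal{R}}(\vec{f})\le\prod_{j}M_{\mathcal{R}}f_{j}$ together with H\"older; the decisive observation is that the weight $W$ in Theorem~A is independent of $p$, so Marcinkiewicz interpolation between two exponents upgrades the linear weak-type bound to strong type $\|M_{\mathcal{R}}f\|_{L^{p}(\omega)}\le C\|f\|_{L^{p}(W)}$. Your argument is shorter and in fact delivers the strong-type multilinear inequality (the corollary the paper states after the theorem) before the weak-type one; the paper's proof, by contrast, avoids interpolation on the linear side and shows that Tanaka's selection mechanism transfers intact to the multilinear setting, which is of independent interest in situations where a pointwise product domination is not available.
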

By interpolation, Theorem \ref{tm2} yields the following corollary.\begin{corollary}
Let $1<p_{1},...,p_{m}<\infty$ and $\sum^{m}_{i=1}\frac{1}{p_{i}}=\frac{1}{p}$. Given $\vec{\omega}=( \omega_{1}, \omega_{2},...,\omega_{m})$, where each $\omega_{j}$ is an arbitrary
weight. Set $\nu_{\vec{\omega}}=\prod^{m}_{j=1}\omega_{j}^{{p}/{p_{j}}}$ and
$W_{j}$=$\mathcal{M} _{\mathcal{R}}\mathcal{M}_\mathcal{R}^{n-1}...\mathcal{M}_\mathcal{R}^{1}\omega_{j}$. Then, there exists a positive constant $C$ which does not depend on $\omega_{j}$ and $f_{j}$, such that the following inequality holds \begin{equation}\int_{\mathbb{R}^{n}}[\mathcal{M}_{\mathcal{R}}(\vec{f})(y)]^{p}\nu_{\vec{\omega}}(y)dy\leq C\prod^{m}_{j=1}\|f_{j}\|^{p}_{L^{p_{j}}(\mathbb{R}^{n},W_{j})}\end{equation}\end{corollary}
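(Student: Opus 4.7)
The plan is to deduce the corollary from the weak-type estimate in Theorem~\ref{tm2} by a multilinear Marcinkiewicz interpolation, based on the following scale invariance of the weights. For any $\alpha>0$ with $\alpha p_j>1$ for every $j$, the tuple $(\alpha p_1,\ldots,\alpha p_m)$ satisfies $\sum_{j=1}^m 1/(\alpha p_j)=1/(\alpha p)$; the weights $W_j=\mathcal{M}_{\mathcal{R}}\mathcal{M}_{\mathcal{R}}^{n-1}\cdots\mathcal{M}_{\mathcal{R}}^{1}\omega_j$ involve no exponent at all; and
$$\prod_{j=1}^m \omega_j^{(\alpha p)/(\alpha p_j)}\;=\;\prod_{j=1}^m \omega_j^{p/p_j}\;=\;\nu_{\vec{\omega}}.$$
Hence Theorem~\ref{tm2} applied at the exponents $(\alpha p_1,\ldots,\alpha p_m,\alpha p)$, followed by the substitution $\lambda=t^m$ and raising both sides to the $\alpha p$-th power, produces the weak-type inequality
$$\nu_{\vec{\omega}}\bigl(\{x\in\mathbb{R}^n:\mathcal{M}_{\mathcal{R}}(\vec{f})(x)>\lambda\}\bigr)\;\le\;\frac{C^{m\alpha p}}{\lambda^{\alpha p}}\prod_{j=1}^m\|f_j\|_{L^{\alpha p_j}(W_j)}^{\alpha p}$$
for every $\lambda>0$, with \emph{the same weights} $\nu_{\vec{\omega}}$ on the left and $W_j$ on the right regardless of $\alpha$.

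Because each $p_j>1$, we may choose $\max_j(1/p_j)<\alpha_0<1<\alpha_1<\infty$. Inserting $\alpha=\alpha_0$ and $\alpha=\alpha_1$ into the displayed inequality gives two weak-type bounds
$$\mathcal{M}_{\mathcal{R}}:\;L^{\alpha_i p_1}(W_1)\times\cdots\times L^{\alpha_i p_m}(W_m)\;\longrightarrow\;L^{\alpha_i p,\infty}(\nu_{\vec{\omega}}),\qquad i=0,1,$$
whose target exponents satisfy $\alpha_0 p<p<\alpha_1 p$ and whose source exponents sandwich each $p_j$. Since all three weights are independent of $\alpha$, this puts us in the standard framework of multilinear Marcinkiewicz interpolation: starting from
$$\int_{\mathbb{R}^n}[\mathcal{M}_{\mathcal{R}}(\vec{f})]^p\nu_{\vec{\omega}}\,dx\;=\;p\int_0^\infty \lambda^{p-1}\,\nu_{\vec{\omega}}\bigl(\{\mathcal{M}_{\mathcal{R}}(\vec{f})>\lambda\}\bigr)\,d\lambda,$$
one truncates each $f_j$ at a $\lambda$-dependent threshold $\lambda^{\mu_j}$ (with $\mu_j>0$ to be tuned), uses the multi-sublinearity of $\mathcal{M}_{\mathcal{R}}$ to dominate the level set by $2^m$ pieces, applies the $\alpha_1$ weak-type bound to the all-small piece and the $\alpha_0$ weak-type bound to each piece containing a large component, and integrates in $\lambda$ by Fubini. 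Choosing the $\mu_j$ so that the powers balance, the right-hand side collapses to $\prod_{j=1}^m \|f_j\|_{L^{p_j}(W_j)}^p$, which is the strong-type estimate asserted by the corollary.

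The main technical obstacle is purely bookkeeping: one has to verify that among the $2^m$ truncation configurations the choice of exponents $\mu_j$ makes every $\lambda$-integral converge and recombines to exactly the norms $\|f_j\|_{L^{p_j}(W_j)}$. All such algebra is forced by the homogeneities $\sum 1/p_j=1/p$ and $\alpha_0<1<\alpha_1$, and no new analytic input beyond Theorem~\ref{tm2} and the scale invariance of the weights is required; in particular, one may also invoke any packaged multilinear Marcinkiewicz interpolation theorem directly to bypass the combinatorial step.
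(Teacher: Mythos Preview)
Your scale-invariance observation is exactly the right starting point: because $\nu_{\vec{\omega}}$ and each $W_j$ are unchanged when $(p_1,\dots,p_m,p)$ is replaced by $(\alpha p_1,\dots,\alpha p_m,\alpha p)$, Theorem~\ref{tm2} does deliver a whole ray of weak-type estimates with \emph{fixed} weights. This is the essential point, and the paper's one-line ``by interpolation'' certainly relies on it.

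The gap is in the interpolation step itself. Two weak-type endpoints lying on a single line in $(1/p_1,\dots,1/p_m)$--space are \emph{not} enough input for a multilinear Marcinkiewicz theorem to return a strong-type bound when $m\ge 2$; the packaged theorems in the literature (Grafakos--Kalton, Grafakos \textit{Modern Fourier Analysis}) require $m+1$ endpoints in general position. Concretely, your truncation scheme breaks down on the mixed pieces. Take $m=2$ and the piece $(f_1^{0},f_2^{1})$ with $f_1^{0}=f_1\mathbf 1_{\{|f_1|\le\lambda^{\mu_1}\}}$, $f_2^{1}=f_2\mathbf 1_{\{|f_2|>\lambda^{\mu_2}\}}$. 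Applying the $\alpha_0$-bound forces you to control $\|f_1^{0}\|_{L^{\alpha_0 p_1}(W_1)}$ with $\alpha_0 p_1<p_1$; after the $\lambda$-integration this produces $\int_{|f_1|^{1/\mu_1}}^{\infty}\lambda^{p-1-\alpha_0 p}\,d\lambda$, which diverges because $p-\alpha_0 p>0$. Applying the $\alpha_1$-bound instead makes the $f_2^{1}$-factor diverge near $\lambda=0$ for the symmetric reason. No choice of $\mu_j$ repairs this, so the ``bookkeeping'' you defer is not merely algebra.

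The clean fix still uses your observation, but at the \emph{linear} level. Since
\[
\mathcal{M}_{\mathcal{R}}(\vec{f})(x)\le\prod_{j=1}^{m}M_{\mathcal{R}}f_j(x),
\]
H\"older with exponents $p_j/p$ (note $\sum_j p/p_j=1$) gives
\[
\int_{\mathbb R^n}\mathcal{M}_{\mathcal{R}}(\vec{f})^{p}\,\nu_{\vec\omega}
\le\prod_{j=1}^{m}\Big(\int_{\mathbb R^n}(M_{\mathcal{R}}f_j)^{p_j}\,\omega_j\Big)^{p/p_j}.
\]
Now apply your scaling argument to the $m=1$ case of Theorem~\ref{tm2} (equivalently Theorem~A): the weights $\omega_j$ and $W_j$ are independent of the exponent, so one has the weak-type bound $M_{\mathcal{R}}:L^{q}(W_j)\to L^{q,\infty}(\omega_j)$ for every $q>1$, and ordinary (linear) Marcinkiewicz between two such $q$'s on either side of $p_j$ yields the strong bound $\|M_{\mathcal{R}}f_j\|_{L^{p_j}(\omega_j)}\le C\|f_j\|_{L^{p_j}(W_j)}$. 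Substituting back gives the corollary. This is presumably what the paper's terse ``by interpolation'' intends.
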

 \section{notions and preliminaries}\label{NP}First, we give the definitions of two kinds of maximal functions.
\begin{definition}[\bf {multilinear maximal operator with cubes},\cite{LOPTT}]
Given $\vec{f}=(f_{1},f_{2},...,f_{m})$, we define the maximal operator $\mathcal{M}$ by
$$\mathcal{M}(\vec{f})(x)=\sup_{Q\ni x}\prod^{m}_{j=1}\frac{1}{|Q|}\int_{Q}|f_{j}(y)|dy,$$
where the supremum is taken over all cubes $Q$ containing $x$, with sides parallel to the coordinate axes .
\end{definition}
\begin{definition}[\bf {Strong maximal operator with complexity $c$}, \cite{T}]
Let $c=1,2,...,n$. We say that the set of rectangles $\mathcal{R}$ in $\mathbb{R}^{n}$ have the complexity $c$ whenever the side lengths of $R$ are exactly $\alpha_{1}$ or $\alpha_{2}$... or $\alpha_{c}$ for varying $\alpha_{1}$, $\alpha_{2}$,... or $\alpha_{c}>0$. That is, the set of rectangles with complexity $c$ is the $c$-parameter family of rectangles. For a locally integrable function $f$ on $\mathbb{R}^{n}$, the strong maximal operator with complexity $c$ is defined by
$${M}_{\mathcal {R}}^cf(x)=\sup_{R\in \mathcal{R}_{c},R\ni x}\frac{1}{|R|}\int_{R}|f(y)|dy,$$
where $\mathcal{R}_{c}$ is the set of all rectangles in $\mathbb{R}^{n}$, with sides parallel to the coordinate axes and having the complexity $c$.\end{definition}
Then we can define the multilinear setting of it. That is,
\begin{definition}[\bf {Multilinear strong maximal operator with complexity $c$}]
Let $c=1,2,...,n$, and $\vec{f}=(f_{1},...,f_{m})$ is an $m$-dimensional vector of locally integrable functions, the strong maximal operator $\mathcal{{M}}_{\mathcal {R}}^{c}(\vec{f})$ is defined by \begin{equation}\mathcal{{M}}_{\mathcal {R}}^{c}{f})(x)=\sup_{R\in \mathcal{R}_{c}, R\ni x}\prod^{m}_{j=1}\frac{1}{|R|}\int_{R}|f_{j}(y)|dy,\end{equation}
where $\mathcal{R}_{c}$ is the set of all rectangles in $\mathbb{R}^{n}$, with sides parallel to the coordinate axes and having the complexity $c$.
\end{definition}
\remark If $c=n$, it is easy to check that ${M}_{\mathcal {R}}^c$ coincides with the strong maximal function $M_{\mathcal {R}}$, and  $\mathcal{{M}}_{\mathcal {R}}^{c}$ coincides with the multilinear strong maximal operators $\mathcal{{M}}_{\mathcal {R}}$.

\subsection {Basic facts about weights}For $1<p<\infty$, a weight $\omega$ associated with $\mathcal{R}$ is said to satisfy the $A_{p, \mathcal{R}}$ condition, if it holds that
\begin{equation*}
\sup_{R\in\mathcal{R}}\left(\frac{1}{|{R}|}\int_{{R}}\omega dx\right)\left(\frac{1}{|R|}\int_{R}\omega^{1-p'}dx\right)^{\frac{p}{p'}}<\infty.
\end{equation*}
In the case $p=1$, we say that $\omega$ satisfies the $A_{1,\mathcal{R}}$ condition if ${M}_{\mathcal{R}}\omega(x)\leq c\omega(x)$ for almost all $x\in \mathbb{R}^{n}$. It follows from these definitions and the H\"{o}lder inequality that $A_{p,\mathcal{R}}\subset A_{q,\mathcal{R}}$ if $1\leq p\leq q< \infty$. Then it is natural to define the class $A_{\infty, \mathcal{R}}$ by setting $A_{\infty,\mathcal{R}}=\bigcup_{p>1}A_{p,\mathcal{R}}$. Recall that $\omega$ is said to satisfy Condition (A) \cite{HLP} if there are constants $0<\lambda<1$, $0<c(\lambda)<\infty$ such that for all measurable sets $E$, it holds that $\omega(\{x\in \mathbb{R}^{n}: \mathcal{M}_{\mathcal{R}}[\chi_{E}](x)>\lambda\})\leq c(\lambda)\omega(E).$ A basic fact is presented by Hagelstein, and Parissis \cite{HP} that the asymptotic estimate for the constant in Condition (A) is equivalent to $\omega \in A_{\infty, \mathcal{R}}$.

The multiple version of $A_{p, \mathcal{R}}$ is defined as follows:
\begin{definition}[\cite{GLPT}]
Let $1\leq p_{1},...,p_{m}<\infty$. Given $\vec{\omega}=(\omega_{1},...,\omega_{m})$, set
$\nu_{\vec{\omega}}=\prod^{m}_{i=1}\omega_{i}^{p/p_{i}}.$
The $m$-tuple weight $\vec{\omega}$ associated with $\mathcal{R}$ is said to satisfy the $A_{\vec{p}, \mathcal{R}}$ condition if
\begin{equation*}
\sup_{R\in \mathcal{R}}\left(\frac{1}{|R|}\int_{R}\nu_{\vec{\omega}} dx\right)\prod^{m}_{j=1}\left(\frac{1}{|R|}\int_{R}\omega_{j}^{1-p'_{j}}dx\right)^{\frac{p}{p'_{j}}}<\infty.
\end{equation*}
When $p_{j}=1$, $(\frac{1}{|R|}\int_{R}\omega_{j}^{1-p'_{j}})^{1/p'_{j}}$ is understood as $(\inf_{R}\omega_{j})^{-1}$.
\end{definition}
\subsection {Basic facts about Young functions}
First, we need to recall some definitions and basic facts about Young functions.
\begin{definition}[\cite{GLPT}]
A Young function is a continuous, convex, increasing function $\Phi : [0,\infty]\rightarrow [0,\infty]$ with $\Phi(0)=0$ and $\Phi(t)\rightarrow\infty$ as $t\rightarrow\infty$. For $0<\varepsilon<1$ and $t\geq 0$,
the properties of $\Phi$ easily imply that \begin{equation*}
\Phi(\varepsilon t)\leq \varepsilon \Phi(t).
\end{equation*}
The $\Phi$-norm of a function $f$ over a set $E$ with finite measure is defined by
\begin{equation*}
\|f\|_{\Phi, E}=\inf\left\{\lambda>0:\frac{1}{|E|}\int_{E}\Phi(\frac{|f(x)|}{\lambda})dx\leq1\right\}.
\end{equation*}
Associated with each Young function $\Phi$, one can define its complementary function
\begin{equation*}
\bar{\Phi}(s)=\sup_{t>0}\{st-\Phi(t)\}, \quad \hbox{\ for s\ }\geq0.
\end{equation*}
It is well known that $\bar{\Phi}$-norms are related to the $L_{\Phi}$-norms via the following generalized H\"{o}lder inequality:
\begin{equation*}
\frac{1}{|E|}\int_{E}|f(x)g(x)|dx\leq2||f||_{\Phi, E}||g||_{\bar{\Phi}, E}.
\end{equation*}
\end{definition}
\begin{definition}[\bf{Strong $B^{\ast}_{p}$ condition}, \cite{LL}]
Let $1<p<\infty$. A Young function $\Phi$ is said to satisfy the strong $B^{\ast}_{p}$ condition, or  $\Phi\in B^{\ast}_{p}$, if there is a positive constant $c$ such that the following inequality holds
\begin{equation*}
\int^{\infty}_{c}\frac{\Phi_{n}(\Phi(t))}{t^{p}}\frac{dt}{t}<\infty,
\end{equation*}
where $\Phi_{n}(t): = t[\log(e+t)]^{n-1}\sim t[1+(\log^{+}t)^{n-1}]$ for all $t> 0$. \end{definition}

\section {The F-S inequality with weights in $A_{\infty,\mathcal{R}}$}}
In this section, we give the proof of Theorem 1.1, first we give two lemmas which play an important role in our proof.
\begin{lemma}
Let $1<p_{1},...,p_{m}<\infty$ and $0<p<\infty$ such that $\frac{1}{p}=\sum^{m}_{j=1}\frac{1}{p_{j}}.$ Assume that $\mathcal{R}$ is a basis and that $\{\Psi_{j}\}^{m}_{j=1}$ is a sequence of Young functions such that $\Psi_{j}\in B^{*}_{p_{j}}$, then, $\mathcal{M}_{\mathcal{R}}^{\vec{\Psi}}$ is bounded from $L^{p_{1}}(\mathbb{R}^{n})\times L^{p_{2}}(\mathbb{R}^{n})\times...\times L^{p_{m}}(\mathbb{R}^{n})$ to $ L^{p}(\mathbb{R}^{n})$.
\end{lemma}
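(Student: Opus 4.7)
The plan is to reduce the multilinear estimate to a single-function Orlicz strong maximal bound. First, exploiting the elementary fact that the supremum of a product is at most the product of suprema, I would establish the pointwise domination
$$\mathcal{M}_{\mathcal{R}}^{\vec{\Psi}}(\vec{f})(x) \leq \prod_{j=1}^{m} M_{\mathcal{R}}^{\Psi_j}(f_j)(x), \qquad \text{where } M_{\mathcal{R}}^{\Psi}(g)(x) := \sup_{R\in\mathcal{R},\, R\ni x} \|g\|_{\Psi, R}.$$
Raising to the $p$-th power, integrating over $\mathbb{R}^n$, and invoking H\"older's inequality with exponents $p_j/p$ (legitimate since $\sum_{j=1}^m 1/p_j = 1/p$) yields
$$\|\mathcal{M}_{\mathcal{R}}^{\vec{\Psi}}(\vec{f})\|_{L^p(\mathbb{R}^n)} \leq \prod_{j=1}^m \|M_{\mathcal{R}}^{\Psi_j} f_j\|_{L^{p_j}(\mathbb{R}^n)},$$
so matters reduce to proving the linear estimate $\|M_{\mathcal{R}}^{\Psi_j} f\|_{L^{p_j}} \lesssim \|f\|_{L^{p_j}}$ for each $j$.

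For this single-function bound I would combine the $B^{*}_{p_j}$ hypothesis with the Jessen--Marcinkiewicz--Zygmund endpoint estimate (\ref{endpoint-JMZ}). The defining inequality $\|f\|_{\Psi_j, R} > \lambda$ is equivalent to $\frac{1}{|R|}\int_R \Psi_j(|f|/\lambda)\,dx > 1$, which gives the level-set inclusion
$$\{x \in \mathbb{R}^n : M_{\mathcal{R}}^{\Psi_j} f(x) > \lambda\} \subseteq \{x \in \mathbb{R}^n : M_{\mathcal{R}}(\Psi_j(|f|/\lambda))(x) > 1\}.$$
Applying (\ref{endpoint-JMZ}) produces the distributional bound $|\{M_{\mathcal{R}}^{\Psi_j} f > \lambda\}| \lesssim \int_{\mathbb{R}^n} \Phi_n(\Psi_j(|f(x)|/\lambda))\,dx$. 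Layer-cake and Fubini then convert the $L^{p_j}$-norm into
$$\int_{\mathbb{R}^n} |f(x)|^{p_j} \int_0^\infty \frac{\Phi_n(\Psi_j(s))}{s^{p_j+1}}\,ds\,dx$$
after the substitution $s = |f(x)|/\lambda$ in the $\lambda$-integral. The resulting $s$-integral is precisely of $B^{*}_{p_j}$ type.

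The main technical obstacle is convergence of this $s$-integral near $s = 0$: since Young functions satisfy $\Psi_j(s) \lesssim s$ for small $s$, the integrand behaves like $s^{-p_j}$ near the origin and is not integrable when $p_j > 1$. I would circumvent this with the standard truncation trick, writing $f = f\chi_{\{|f|>\lambda\}} + f\chi_{\{|f|\leq\lambda\}}$ and using the elementary pointwise bound $\|f\chi_{\{|f|\leq\lambda\}}\|_{\Psi_j, R} \leq \lambda / \Psi_j^{-1}(1)$ together with sublinearity of the Luxemburg norm. This confines the JMZ step to the unbounded part, so after the substitution only the tail of the $B^{*}_{p_j}$ integral is needed. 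The remainder of the argument is routine bookkeeping, and the resulting single-function estimate is essentially the one recorded in \cite{LL}, which one may alternatively cite directly to close the proof.
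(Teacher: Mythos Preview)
Your proposal is correct and follows essentially the same approach as the paper: both establish the pointwise domination $\mathcal{M}_{\mathcal{R}}^{\vec{\Psi}}(\vec{f})(x)\le\prod_{j=1}^m M_{\mathcal{R}}^{\Psi_j}(f_j)(x)$, apply H\"older's inequality, and reduce to the linear bound for $M_{\mathcal{R}}^{\Psi_j}$ on $L^{p_j}$. The only difference is that the paper simply cites \cite[Theorem~2.1]{LL} for this linear bound, whereas you additionally sketch its proof via the JMZ endpoint estimate and the $B^{*}_{p_j}$ integral---extra detail, but not a different route.
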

\begin{proof}
Let $\mathbf{M}_{\mathcal{R}}^{\Psi}$ be the Orlicz maximal operator on $\mathbb{R}^{n}$ defined by
\begin{equation*}
\mathbf{M}_{\mathcal{R}}^{\Psi}(f)(x)=\sup_{R\in \mathcal{R}, R\ni x}\|f\|_{\Psi, R},
\end{equation*}
where the supremum is taken over all rectangles with sides parallel to the coordinate axes.

Observing that for all $x\in \mathbb{R}^{n}$ and for all nonnegative functions $\vec{f}=(f_{1},...,f_{m})$, multilinear Orlicz maximal function is controlled by the $m$-fold tensor product of the Orlicz maximal function of each variable. That is,
\begin{equation*}
\mathcal{M}_{\mathcal{R}}^{\vec{\Psi}}(\vec{f})(x)\leq\prod^{m}_{j=1}\mathbf{M}_{\mathcal{R}}^{\Psi_{j}}(f_{j})(x).
\end{equation*}
Since $\Psi_{j}\in B^{*}_{p_{j}}$, it follows that every $\mathbf{M}_{\mathcal{R}}^{\Psi_{j}}$ is bounded on $L^{p_{j}} (\mathbb{R}^{n})$ (\cite{LL},Theorem 2.1). This yields immediately that $\mathcal{M}_{\mathcal{R}}^{\vec{\Psi}}$ is bounded from $L^{p_{1}}(\mathbb{R}^{n})\times L^{p_{2}}(\mathbb{R}^{n})\times...\times L^{p_{m}}(\mathbb{R}^{n})$ to $L^{p}(\mathbb{R}^{n})$.
\end{proof}
\begin{definition}[\cite{GLPT}]
Let $\mathcal{R}$ be a basis and let $0<\alpha<1$. A finite sequence $\{\tilde{A}_{i}\}^{M}_{i=1}\subset \mathcal{R}$ of sets of finite $dx$-measure is called $\alpha$-scattered with respect to the Lebesgue measure if
$$\left|\tilde{A}_{i}\cap \bigcup_{s<i}\tilde{A}_{s}\right|\leq \alpha|\tilde{A}_{i}|,\quad\quad \hbox{for \ all }\ 1<i\leq M.$$
\end{definition}
\begin{lemma}[\cite{GLPT}]
Let $\mathcal{R}$ be a basis and let $\omega$ be a weight associated with this basis. Suppose further that $\omega$ satisfies condition (A) for some $0<\lambda<1$ and $0<c(\lambda)<\infty$. Then given any finite sequence $\{A_{i}\}^{M}_{i=1}$ of sets $A_{i}\in \mathcal{R}$, it holds that
\begin{enumerate}
\item [(1)] we can find a subsequence $\{\tilde{A}_{i}\}_{i\in I}$ of $\{A_{i}\}^{M}_{i=1}$ which is $\lambda$-scattered with respect to the Lebesgue measure;\\
\item [(2)] $\tilde{A}_{i}=A_{i}$, $i\in I$;\\
\item [(3)] for any $1\leq i<j\leq M+1$
$$\omega\left(\bigcup_{s<j}A_{s}\right)\leq c(\lambda)\left[\omega\left(\bigcup_{s<i}A_{s}\right)+\omega\left(\bigcup_{i\leq s<j}\tilde{A}_{s}\right)\right]$$
\end{enumerate}\end{lemma}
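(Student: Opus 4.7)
The plan is to construct the $\lambda$-scattered subsequence by a greedy one-pass procedure, then read off parts (1) and (2) directly from the selection rule and derive part (3) from Condition (A). Concretely, I would process $A_1, A_2, \ldots, A_M$ in order while maintaining the running union $U_k$ of those sets already accepted from $A_1, \ldots, A_{k-1}$ (so $U_1 = \emptyset$). At step $k$, put $k \in I$ and set $\tilde A_k = A_k$ iff $|A_k \cap U_k| \leq \lambda |A_k|$; otherwise discard $A_k$. Then (2) holds by construction, and the acceptance criterion is literally the $\lambda$-scattered condition on $\{\tilde A_i\}_{i \in I}$, so (1) follows.

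The engine behind (3) is the observation that if $A_k$ is discarded, then $|A_k \cap U_k| > \lambda |A_k|$, so the rectangle $A_k$ itself witnesses $\mathcal{M}_\mathcal{R}[\chi_{U_k}](x) > \lambda$ for every $x \in A_k$. Therefore every discarded $A_k$ lies in $\{\mathcal{M}_\mathcal{R}[\chi_{U_k}] > \lambda\}$, and because the running union only grows, $U_k$ may be enlarged to any later accepted union inside this superlevel set.

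To prove (3), fix $1 \leq i < j \leq M+1$ and set $E_1 = \bigcup_{s<i} A_s$ and $E_2 = \bigcup_{i \leq s < j} \tilde A_s$. I would split $\bigcup_{s<j} A_s$ into $E_1$, the accepted contribution from $[i,j)$ (already contained in $E_2$), and the discarded contribution from $[i,j)$. For any discarded $A_k$ with $i \leq k < j$, its running union $U_k$ is built from accepted sets with indices $<i$ (contained in $E_1$) together with accepted sets with indices in $[i,k)$ (contained in $E_2$), so $U_k \subseteq E_1 \cup E_2$. The observation above then gives $A_k \subseteq \{\mathcal{M}_\mathcal{R}[\chi_{E_1 \cup E_2}] > \lambda\}$; since $E_1 \cup E_2$ itself lies in this superlevel set up to a null set (strong differentiation of characteristic functions, as $\lambda < 1$), the whole union $\bigcup_{s<j} A_s$ is contained in $\{\mathcal{M}_\mathcal{R}[\chi_{E_1 \cup E_2}] > \lambda\}$ modulo a null set. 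Condition (A) applied to $E_1 \cup E_2$, together with subadditivity of $\omega$, yields the claimed bound with constant $c(\lambda)$.

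The step I expect to watch most carefully is the bookkeeping showing $U_k \subseteq E_1 \cup E_2$ for every discarded index $k \in [i,j)$: this is precisely what lets the estimate be formulated relative to the block $[i,j)$ rather than starting from index $1$, and it is what makes the lemma iterable across consecutive blocks in the Fefferman--Stein-type applications that follow. Once this containment is secured, the rest is a routine application of Condition (A) and finite subadditivity.
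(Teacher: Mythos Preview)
The paper does not prove this lemma; it is quoted from \cite{GLPT} (the argument ultimately goes back to Jawerth \cite{J}) and used as a black box in the proof of Theorem~\ref{tm1}. So there is no paper proof to compare against, and the relevant question is whether your argument is correct. It is, and it is essentially the standard one.

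One small remark: you invoke strong differentiation of characteristic functions to place $E_1\cup E_2$ inside the superlevel set $\{\mathcal{M}_{\mathcal{R}}[\chi_{E_1\cup E_2}]>\lambda\}$. This is fine for the basis of axis-parallel rectangles, but it is also unnecessary and slightly weakens the generality. Since $E_1$ and $E_2$ are finite unions of sets from $\mathcal{R}$, every point $x\in E_1\cup E_2$ lies in some $A_s\in\mathcal{R}$ that is entirely contained in $E_1\cup E_2$, so $\mathcal{M}_{\mathcal{R}}[\chi_{E_1\cup E_2}](x)=1>\lambda$ at every such point, not just almost every. With that tweak the argument works for an arbitrary basis, matching the level of generality at which the lemma is stated.
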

Now, we are in the position to give the proof of Theorem \ref{tm1}.
\begin{proof}
The argument we will employ here is essentially a combination of the ideas from \cite{GLPT}, \cite{J}, \cite{LL}. Let $N>0$ be a large integer. We will prove the required estimate for the quantity
$$\int_{2^{-N}<\mathcal{M}^{\vec{\Psi}}_{\mathcal{R}}(\vec{f})\leq 2^{N+1}}\mathcal{M}^{\vec{\Psi}}_{\mathcal{R}}(\vec{f})(x)^{p}\nu_{\vec{\omega}}(x)dx$$
with a bound  independent of $N$. First, for each integer $k$, $|k|\leq N$, there exist a compact set
$$K_{k}\subset\left\{\mathcal{M}^{\vec{\Psi}}_{\mathcal{R}}(\vec{f})(x)>2^{k}\right\}$$
satisfying
$$\nu_{\vec{\omega}}(K_{k})\leq \nu_{\vec{\omega}}(\{\mathcal{M}^{\vec{\Psi}}_{\mathcal{R}}(\vec{f})(x)>2^{k})\leq 2 \nu_{\vec{\omega}}(K_{k})$$
and a finite sequence $b_{k}=\{B_{r}^{k}\}_{r\geq 1}$ of sets $B_{r}^{k}\in \mathcal{R}$ with
$$\prod_{j=1}^{m}\|f_{j}\|_{\Psi_{j},B^{k}_{r}}> 2^{k}.$$
We set $b_{k}=\emptyset$ if $|k|>N$ and
$$\Omega_{k}=\left\{
  \begin{aligned}
    \bigcup_{r}B^{k}_{r}&, & |k|\leq N,\\
    \emptyset                                     &, & |k|>N.
  \end{aligned}
\right.
$$
Observe that these sets are decreasing in $k$, i.e.,$\Omega_{k+1}\subset \Omega_{k}$.
We now distribute the sets in $\bigcup_{k}b_{k}$ over $\mu$ sequences $\{A_{i}(l)\}_{i\geq1}, 0\leq l\leq\mu-1$,
where $\mu$ will be chosen momentarily to be an appropriately large natural number. Set $i_{0}(0)=1$. In the first $i_{1}(0)-i_{0}(0)$ entries of $\{A_{i}(0)\}_{i\geq1,}$, i.e., for
$$i_{0}(0)\leq i<i_{1}(0),$$
we place the elements of the sequence $b_{N}=\{B^{N}_{r}\}_{r\geq1}$ in the order indicated by the index $r$. For the next $i_{2}(0)-i_{1}(0)$ entries of $\{A_{i}(0)\}_{i\geq1,}$, i.e., for
$$i_{1}(0)\leq i<i_{2}(0),$$
we place the elements of the sequence $b_{N-\mu}$. Continue in this way until we reach the first integer $m_{0}$ such that $N-m_{0}\mu\geq-N$, when we stop. For indices $i$ satisfying
$$i_{m_{0}}(0)\leq i<i_{m_{0}+1}(0),$$
we place in the sequence $\{A_{i}(0)\}_{i\geq1}$ the elements of $b_{N-m_{0}\mu}$. The sequences $\{A_{i}(l)\}_{i\geq1}$, $1\leq l\leq\mu-1$, are defined similarly, starting from $b_{N-l}$ and using the families $b_{N-l-s\mu}, s=0, 1, ..., m_{l}$, where $m_{l}$ is chosen so that $N-l-m_{l}\mu\geq-N$.\par
Since $\nu_{\vec{\omega}}\in A_{\infty,\mathcal{R}}, \nu_{\vec{\omega}}$ satisfies condition $(A)$, and we may apply Lemma 3.2 to each $\{A_{i}(l)\}_{i\geq1}$ for some fixed $0<\lambda<1$. Then we obtain sequences
$$\{\tilde{A}_{i}(l)\}_{i\geq1}\subset\{A_{i}(l)\}_{i\geq1}, 0\leq l\leq\mu-1,$$
which are $\lambda$-scattered with respect to the Lebesgue measure. In view of the definition of the set $\Omega_{k}$ and the construction of the families $\{A_{i}(l)\}_{i\geq1}$, we can use assertion (3) of Lemma 3.2 to obtain
$$
\aligned
\nu_{\vec{\omega}}(\Omega_{k})
&\leq c\left[\nu_{\vec{\omega}}(\Omega_{k+\mu})+\nu_{\vec{\omega}}\left(\bigcup_{i_{m_{l}}\leq i<i_{m_{l+1}}}\tilde{A}_{i}(l)\right)\right]\\
&\leq c\nu_{\vec{\omega}}(\Omega_{k+\mu})+c\sum^{i_{m_{l+1}}(l)-1}_{i=i_{m_{l}}(l)}\nu_{\vec{\omega}}(\tilde{A}_{i}(l))
\endaligned
$$
if $k=N-l-m_{l}\mu$. It will be enough to consider these indices $k$ because the sets $\Omega_{k}$ are decreasing.\par
Now all the sets $\{{\tilde{A}_{i}}(l)\}^{i_{m+1}(l)-1}_{i=i_{m}(l)}$ belong to $b_{k}$ with $k=N-l-m_{l}\mu$, and therefore
$$\prod_{j=1}^{m}\|f_{j}\|_{\Psi_{j},{\tilde{A}_{i}}(l)}> 2^{k}.$$
Hence, it follows that
$$\int_{2^{-N}<\mathcal{M}^{\vec{\Psi}}_{\mathcal{R}}(\vec{f})\leq 2^{N+1}}\mathcal{M}^{\vec{\Psi}}_{\mathcal{R}}(\vec{f})(x)^{p}\nu_{\vec{\omega}}(x)dx\leq 2^{p}\sum_{k}2^{kp}\nu_{\vec{\omega}}(\Omega_{k}):=I_{1}$$
and then
$$
\aligned
&I_{1}\leq C\sum_{k}2^{kp}\nu_{\vec{\omega}}(\Omega_{k+\mu})+C\sum^{\mu-1}_{l=0}\sum_{i\in I(l)}\nu_{\vec{\omega}}({\tilde{A}_{i}}(l))\left(\prod_{j=1}^{m}\|f_{j}\|_{\Psi_{j},{\tilde{A}_{i}}(l)}\right)^{p}\\
&\quad=C2^{-p\mu}\sum_{k}2^{kp}\nu_{\vec{\omega}}(\Omega_{k})+C\sum^{\mu-1}_{l=0}\sum_{i\in I(l)}\nu_{\vec{\omega}}({\tilde{A}_{i}}(l))\left(\prod_{j=1}^{m}\|f_{j}\|_{\Psi_{j},{\tilde{A}_{i}}(l)}\right)^{p}\\
\endaligned
$$
If we choose $\mu$ so large that $C2^{-\mu p}\leq \frac{1}{2}$, and since everything involved is finite, the first term on the right-hand side can be subtracted from the left-hand side. This yields that
$$
\aligned
&\int_{2^{-N}<\mathcal{M}^{\vec{\Psi}}_{\mathcal{R}}(\vec{f})\leq 2^{N+1}}\mathcal{M}^{\vec{\Psi}}_{\mathcal{R}}(\vec{f})(x)^{p}\nu_{\vec{\omega}}(x)dx\\
&\leq 2^{p+1}C\sum^{\mu-1}_{l=0}\sum_{i\in I(l)}\nu_{\vec{\omega}}({\tilde{A}_{i}}(l))\left(\prod_{j=1}^{m}\|f_{j}\|_{\Psi_{j},{\tilde{A}_{i}}(l)}\right)^{p}\\
&\leq 2^{p+1}C\sum^{\mu-1}_{l=0}\sum_{i\in I(l)}\frac{\nu_{\vec{\omega}}({\tilde{A}_{i}}(l))}{|{\tilde{A}_{i}}(l)|}\left(\prod_{j=1}^{m}\|f_{j}\|_{\Psi_{j},{\tilde{A}_{i}}(l)}\right)^{p}|{\tilde{A}_{i}}(l)|.
\endaligned
$$
Since $\nu_{\vec{\omega}}=\prod_{j=1}^{m}\omega_{j}^{p/p_{j}}$, applying the H\"{o}lder inequality, we have
$$
\aligned
\frac{\nu_{\vec{\omega}}({\tilde{A}_{i}}(l))}{|{\tilde{A}_{i}}(l)|}
&=\frac{1}{|{\tilde{A}_{i}}(l)|}\int_{{\tilde{A}_{i}}(l)}\prod_{j=1}^{m}\omega_{j}^{p/p_{j}}dx\\
&\leq \frac{1}{|\tilde{A}_{i}(l)|}\prod_{j=1}^{m}\left(\int_{\tilde{A}_{i}(l)}\omega_{j}dx\right)^{p/p_{j}}\\
&= \prod_{j=1}^{m}\left(\frac{\omega_{j}(\tilde{A}_{i}(l))}{|\tilde{A}_{i}(l)|}\right)^{p/p_{j}}.
\endaligned
$$
Thus, we have
\begin{equation}\label{B}
\aligned
&2^{p+1}C\sum^{\mu-1}_{l=0}\sum_{i\in I(l)}\frac{\nu_{\vec{\omega}}(\tilde{A_{i}}(l))}{|\tilde{A}_{i}(l)|}\left(\prod_{j=1}^{m}\|f_{j}\|_{\Psi_{j},\tilde{A_{i}}(l)}\right)^{p}|\tilde{A_{i}}(l)|\\
&\leq 2^{p+1}C\sum^{\mu-1}_{l=0}\sum_{i\in I(l)}\prod_{j=1}^{m}\left\|f_{j}\left(\frac{\nu_{\omega_{j}}(\tilde{A_{i}}(l))}{|\tilde{A_{i}}(l)|}\right)^{1/p_{j}}\right\|^{p}_{\Psi_{j},\tilde{A_{i}}(l)}|\tilde{A_{i}}(l)|\\
&\leq 2^{p+1}C\sum^{\mu-1}_{l=0}\sum_{i\in I(l)}\prod_{j=1}^{m}\left\|f_{j}\left(\mathcal{M}_{\mathcal{R}}\omega_{j}\right)^{1/p_{j}}\right\|^{p}_{\Psi_{j},\tilde{A_{i}}(l)}|\tilde{A_{i}}(l)|.
\endaligned
\end{equation}
For each $l$, let
$E_{1}(l)=\tilde{A}_{1}(l)$ and $E_{i}(l)=\tilde{A}_{i}(l)\setminus\cup \tilde{A}_{s}(l)$, $i>1$. Recall that the sequences $a(l)=\{\tilde{A}_{i}(l)\}_{i\in I(l)}$ are $\lambda$-scattered with respect to the Lebesgue measure. Hence, it holds that
$$|\tilde{A}_{i}(l)|\leq\frac{1}{1-\lambda}|E_{i}(l)|, i>1.$$
Therefore,  (\ref{B}) can be further controlled by
\begin{equation}\label{A}
\frac{C}{1-\lambda}\sum^{\mu-1}_{l=0}\sum_{i\in I(l)}\prod_{j=1}^{m}\left\|f_{j}\left(\mathcal{M}_{\mathcal{R}}\omega_{j}\right)^{1/p_{j}}\right\|^{p}_{\Psi_{j},\tilde{A_{i}}(l)}|\tilde{E_{i}}(l)|.
\end{equation}
Now since the family $\{E_{i}(l)\}_{i,l}$ consists of pairwise disjoint sets, we can therefore apply Lemma 3.1  to estimate the inequality (\ref{A}). Hence,
$$
\aligned
&\frac{C}{1-\lambda}\sum^{\mu-1}_{l=0}\sum_{i\in I(l)}\prod_{j=1}^{m}\left\|f_{j}\left(\mathcal{M}_{\mathcal{R}}\omega_{j}\right)^{1/p_{j}}\right\|^{p}_{\Psi_{j},\tilde{A_{i}}(l)}|\tilde{E_{i}}(l)|\\
&\leq C\int_{\mathbb{R}^{n}}\mathcal{M}^{\vec{\Psi}}_{\mathcal{R}}\left(f_{1}(\mathcal{M}_{\mathcal{R}}\omega_{1})^{1/p_{1}},...,f_{m}(\mathcal{M}_{\mathcal{R}}\omega_{m})^{1/p_{m}}\right)(x)^{p}dx\\
&\leq C\prod^{m}_{j=1}\left\|f_{j}(\mathcal{M}_{R}\omega_{j})^{1/p_{j}}\right\|^{p}_{L^{p_{j}}(\mathbb{R}^{n})}=C\prod^{m}_{j=1}\left\|f_{j}\right\|^{p}_{L^{p_{j}}(\mathcal{M}_{\mathcal{R}}\omega_{j})}.
\endaligned
$$
\end{proof}
\section{The F-S inequality with arbitrary weights}
This section will be devoted to give the proof of Theorem 1.2. In order to demonstrate this theorem clearly, we consider more general setting, the multilinear strong maximal operator with complexity $c$. Theorem 1.2 follows immediately once the following estimate is proved:
$$\nu_{\vec{\omega}}\left({x\in\mathbb{R}^{n}:\mathcal{M}_{\mathcal{R}}^{c} (\vec{f})(x)> t^{m}}\right)^{1/p}\leq \prod^{m}_{j=1}\frac{C}{t}\|f_{j}\|_{L^{p_{j}}(\mathbb{R}^{n},W_{j})},$$
where $c=1,2...,n$ and $W_{j}$=$\mathcal{M} _{\mathcal{R}}^{c} \mathcal{M}_{\mathcal{R}}^{c-1} ...\mathcal{M}_{\mathcal{R}}^{1} \omega_{j}$, $j=1,2,...,m$.
The same selection procedure as in \cite{T} will be used in our proof. We only consider the bilinear case, the multilinear case can be obtained in the similar way easily. Moreover, we also need the following lemma.
\begin{lemma}[\cite{LOPTT}]
Let $\frac{1}{p}=\frac{1}{p_{1}}+\frac{1}{p_{2}}+...+\frac{1}{p_{m}}$ and $\nu_{\vec{\omega}}=\prod^{m}_{j=1}\omega_{j}^{\frac{p}{p_{j}}}$, if $1\leq p_{j}<\infty$, then for arbitrary weights $\omega_1,..,\omega_{j}$, it holds that
$$\|\mathcal{M}(\vec{f})\|_{L^{p,\infty}(\nu_{\vec{\omega}})}\leq c\prod^{m}_{j=1}\|f_{j}\|_{L^{p_{j}}(\mathcal{M}\omega_{j})}.$$\end{lemma}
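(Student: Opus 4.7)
My plan is to establish the weak-type bound via a Vitali covering argument combined with a two-step H\"{o}lder maneuver tailored to the product structure of $\nu_{\vec{\omega}}$. By the homogeneity $\mathcal{M}(\vec{f}/t) = t^{-m}\mathcal{M}(\vec{f})$ and the scaling of the target norm, it suffices to prove the $t=1$ case
\begin{equation*}
\nu_{\vec{\omega}}\bigl(\{x \in \mathbb{R}^n : \mathcal{M}(\vec{f})(x) > 1\}\bigr)^{1/p} \leq C \prod_{j=1}^{m} \|f_j\|_{L^{p_j}(\mathcal{M}\omega_j)}.
\end{equation*}

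For each point $x$ in the level set $E := \{\mathcal{M}(\vec{f}) > 1\}$, I will fix a cube $Q_x \ni x$ realizing $\prod_{j=1}^m \frac{1}{|Q_x|}\int_{Q_x}|f_j| > 1$, and extract a pairwise disjoint subfamily $\{Q_k\}$ by a standard Vitali selection so that $E \subseteq \bigcup_k 3Q_k$. Writing $a_{k,j} := \frac{1}{|Q_k|}\int_{Q_k}|f_j|$, every selected cube then satisfies $\prod_j a_{k,j} > 1$.

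The crux of the argument is to use this inequality in the form $1 \leq \prod_{j=1}^m a_{k,j}^{p}$, inserting this trivial factor into the bound $\nu_{\vec{\omega}}(E) \leq \sum_k \nu_{\vec{\omega}}(3Q_k)$. Two successive applications of H\"{o}lder's inequality are then brought to bear: first pointwise, using $\nu_{\vec{\omega}} = \prod_j \omega_j^{p/p_j}$ to obtain $\nu_{\vec{\omega}}(3Q_k) \leq \prod_j \omega_j(3Q_k)^{p/p_j}$; second across the sum over $k$ with exponents $p_j/p$ (admissible because $\sum_j 1/p_j = 1/p$). This decouples the estimate into $m$ single-weight pieces of the form
\begin{equation*}
\sum_k \omega_j(3Q_k)\, a_{k,j}^{p_j} \leq C \|f_j\|^{p_j}_{L^{p_j}(\mathcal{M}\omega_j)},
\end{equation*}
which I would close by combining Jensen's inequality (to replace $a_{k,j}^{p_j}$ by the $Q_k$-average of $|f_j|^{p_j}$, valid since $p_j > 1$), the pointwise domination $\omega_j(3Q_k)/|3Q_k| \leq \mathcal{M}\omega_j(y)$ for any $y \in Q_k$, and the disjointness of the cubes to assemble local contributions into one global integral against $|f_j|^{p_j}\mathcal{M}\omega_j$.

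The step I expect to be the main obstacle is the H\"{o}lder bookkeeping at the decoupling stage: correctly orchestrating two separate applications of H\"{o}lder and recognizing the insertion of $\prod_j a_{k,j}^{p} \geq 1$ as the non-obvious move that splits the coupled weighted quantity $\nu_{\vec{\omega}}$ into $m$ independent single-weight sums. Once this combinatorial layout is set, each individual piece reduces to a routine linear Fefferman--Stein-type estimate, so no deeper machinery (such as the complexity-$c$ reduction used in the proof of Theorem \ref{tm2}) is needed here.
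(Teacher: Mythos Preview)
The paper does not supply its own proof of this lemma; it is quoted verbatim from \cite{LOPTT} (Theorem~3.3 there) and used as a black box in the induction base of Theorem~\ref{tm2}. Your argument is correct and is essentially the original proof in \cite{LOPTT}: Vitali selection of disjoint cubes, the pointwise H\"older bound $\nu_{\vec{\omega}}(3Q_k)\le\prod_j\omega_j(3Q_k)^{p/p_j}$, insertion of $1\le\prod_j a_{k,j}^{p}$, H\"older across the sum with exponents $p_j/p$, then Jensen plus $\omega_j(3Q_k)/|3Q_k|\le \inf_{Q_k} M\omega_j$ and disjointness to close each factor. Two cosmetic points: the covering argument strictly speaking handles an arbitrary compact subset of the level set (then one takes a supremum), not $E$ itself; and Jensen is valid for $p_j\ge 1$, so the endpoint $p_j=1$ permitted by the hypothesis is also covered.
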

Now,we give the proof of Theorem \ref{tm2}.
\begin{proof}
Notice that Theorem 1.2 holds for $c=1$. In fact, when $c=1$, $\mathcal{R}_{c}$ is the set of cubes, then Theorem 1.2 follows by Lemma 4.1. We assume that this theorem holds for $c=m-1$ and then we shall prove it for $c=m$. With a standard argument, we may assume that the basis $\mathcal{R}_{m}$ is the set of all dyadic rectangles(Cartesian products of dyadic intervals). We further assume that, when $R\in \mathcal{R}_{m}$, the sidelengths $|P_{i}(R)|$ decrease and
$$|P_{1}(R)|=|P_{2}(R)|=...=|P_{\hat{m}}(R)|>|P_{\hat{m}+1}(R)|.$$
For any compact set $K\subset\{x\in \mathbb{R}^{n}: \mathcal{M}_{\mathcal{R}}^{m-1} (\vec f)(x)>t^{2}\}$, there exist $\{R_{i}\}^{M}_{i=1}\subset \mathcal{R}_{m}$ such that $K\subset\bigcup^{M}_{i=1} R_{i}$ and
\begin{equation}\label{D}
\prod^{2}_{j=1}\frac{1}{|R_{i}|}\int_{R_{i}}|f_{j}|dy> t^{2}, \quad j=1,2,...,M.
\end{equation}
First, relabel if necessary so that the $R_{i}'s$ are ordered in a way such that their long sidelengths $|P_{1}(R_{i})|$ decrease. We now give a selection procedure to find subcollection $\{\tilde{R}_{i}\}^{N}_{i=1}\subset\{R_{i}\}^{M}_{i=1}$. \par
Take $\tilde{R}_{1}\doteq R_{1}$ and suppose that we have now chosen the rectangles $\tilde{R}_{1}, \tilde{R}_{2},...,\tilde{R}_{i-1}$. We select $\tilde{R}_{i}$ to be the first rectangle $R_{k}$ occurring after $\tilde{R}_{i-1}$ so that
$$\left|\bigcup^{i-1}_{j=1}\tilde{R}_{j}\cap{R}_{k}\right|<\frac{1}{2}|{R}_{k}|,\quad i=2,3,...,N.$$
Thus, $\tilde{R}_{i}$ enjoys the property that
\begin{equation}\label{EEE}
\left|\bigcup^{i-1}_{j=1}\tilde{R}_{j}\cap\tilde{R}_{i}\right|<\frac{1}{2}|\tilde{R}_{i}|.
\end{equation}
Set $\Omega\doteq\bigcup^{N}_{i=1}\tilde{R}_{i}$. We claim that
\begin{equation}\label{E}
\bigcup^{M}_{i=1}R_{i}\subset\left\{x\in\mathbb{R}^{n}:\mathcal{M}_{\mathcal{R}}^{m-1} (\mathbf{1}_{\Omega},\mathbf{1}_{\Omega})(x)> \frac{1}{2^{2}}\right\}.
\end{equation}
Indeed, choose any point $x$ inside a rectangle $R_{j}$ that is not one of the selected rectangles $\tilde{R}_{i}$.
Then, there exists a unique $J\leq N$ such that
$$
\left|\bigcup^{J}_{j=1}\tilde{R}_{j}\cap R_{j}\right|\geq \frac{1}{2}|R_{j}|.
$$
Since, $|P_{l}(\tilde{R}_{i})|\geq|P_{l}(R_{j})|$ for $l=1, 2,...,\hat{m}$ and $i=1,2,...,J$, if $\tilde{R}_{i}\cap R_{j}\neq \emptyset$, we have
$$P_{l}(\tilde{R}_{i})\cap P_{l}(R_{j})=P_{l}(R_{j}).$$Therefore, we obtain
\begin{align*}
\bigcup^{J}_{j=1}\tilde{R}_{j}\cap R_{j}
&=\bigcup^{J}_{j=1}\left(\prod^{\hat{m}}_{i=1}P_{l}(R_{j})\right)\times \left(\prod^{n}_{l=\hat{m}+1}P_{l}(\tilde{R}_{i})\cap P_{l}(R_{j})\right)\\
&=\left(\prod^{\hat{m}}_{i=1}P_{l}(R_{j})\right)\times \bigcup^{J}_{i=1}\left(\prod^{n}_{l=\hat{m}+1}P_{l}(\tilde{R}_{i})\cap P_{l}(R_{j})\right)
\end{align*}
Hence,
$$\left|\bigcup^{J}_{i=1}\left(\prod^{n}_{l=\hat{m}+1}P_{l}(\tilde{R}_{i})\cap P_{l}(R_{j})\right)\right|\geq \frac{1}{2}\left|\prod^{n}_{l=\hat{m}+1}P_{l}(R_{j})\right|.$$
Thanks to the fact that $|P_{\hat{m}+1}(R_{j})|<|P_{\hat{m}}(R_{j})|$, this implies that
$$\left|\bigcup^{K}_{i=1}\tilde{R}_{i}\cap R\right|\geq \frac{1}{2}|R|,$$
where $R$ is a unique dyadic rectangle containing $x$ and satisfies
$$|P_{1}(R)|=|P_{2}(R)|=...=|P_{\hat{m}}(R)|=|P_{\hat{m}+1}(R_{j})|.$$
This proves (\ref{E}), by the reason that such $R$ should belong to $\mathcal{R}_{m-1}$.
From this, we get
\begin{align}
\nu_{\vec{\omega}}\left(\bigcup^{M}_{i=1}R_{i}\right)^{1/p}
&\leq \nu_{\vec{\omega}}(\{x\in\mathbb{R}^{n}:\mathcal{M}_{\mathcal{R}}^{m-1}  (\mathbf{1}_{\Omega},\mathbf{1}_{\Omega})(x)> 1/2^{2}\})^{1/p}\\
&\nonumber \leq C\prod^{2}_{j=1}\|\mathbf{1}_{\Omega}\|_{L^{p_{j}}(\mathbb{R}^{n},U_{j})},
\end{align}
where $U_{j}=\mathcal{M} _{\mathcal{R}}^{m-1} \mathcal{M}_{\mathcal{R}}^{m-2} ...\mathcal{M}_{\mathcal{R}}^{1} \omega_{j}$.
Set $E(\tilde{R}_{1})=\tilde{R}_{1}$. For $i=2,3,...,N$, set
$$E(\tilde{R}_{i})=\tilde{R}_{i}\setminus\bigcup^{i-1}_{j=1}\tilde{R}_{j}.$$
Then, the sets $E(\tilde{R}_{i})$ are pairwise disjoint and by (\ref{EEE}), it holds that
\begin{equation}\label{C}
|E(\tilde{R}_{i})|\geq \frac{1}{2}|\tilde{R_i}|, \quad i=1,2,...,N.
\end{equation}
Thus,
$$\prod^{2}_{j=1}\|\mathbf{1}_{\Omega}\|_{L^{p_{j}}(\mathbb{R}^{n},U_{j})}=\prod^{2}_{j=1}U_{j}(\Omega)^{\frac{1}{p_{j}}}\leq \prod^{2}_{j=1}\left(\sum^{N}_{i=1}U_{j}(\tilde{R}_{i})\right)^{\frac{1}{p_{j}}}.$$
Hence by (\ref{D}), one may obtain that
\begin{align*}
&\prod^{2}_{j=1}\left(\sum^{N}_{i=1}U_{j}(\tilde{R}_{i})\right)^{\frac{1}{p_{j}}}\times 1\\
&\leq\prod^{2}_{j=1}\left(\sum^{N}_{i=1}U_{j}(\tilde{R}_{i})\right)^{\frac{1}{p_{j}}}\times \prod^{2}_{j=1}\frac{1}{t|\tilde{R}_{i}|}\int_{\tilde{R}_{i}}|f_{j}(y)|dy\\
&\leq\frac{1}{t^{2}}\prod^{2}_{j=1}\left(\sum^{N}_{i=1}U_{j}(\tilde{R}_{i})\left(\frac{1}{|\tilde{R}_{i}|}\int_{\tilde{R}_{i}}|f_{j}(y)|dy\right)^{p_{j}}\right)^{\frac{1}{p_{j}}}
\end{align*}
Note that
\begin{align*}
&\sum^{N}_{i=1}U_{j}(\tilde{R}_{i})\left(\frac{1}{|\tilde{R}_{i}|}\int_{\tilde{R}_{i}}|f_{j}(y)|dy\right)^{p_{j}}\\
&=\sum^{N}_{i=1}\left(\frac{1}{|\tilde{R}_{i}|}\int_{\tilde{R}_{i}}|f_{j}|dy\left(\frac{1}{|\tilde{R}_{i}|}\int_{\tilde{R}_{i}}U_{j}dy\right)^{\frac{1}{p_{j}}}\right)^{p_{j}}|\tilde{R}_{i}|\\
&\leq 2\sum^{N}_{i=1}\left(\frac{1}{|\tilde{R}_{i}|}\int_{\tilde{R}_{i}}|f_{j}|W_{j}^{\frac{1}{p_{j}}}dy\right)^{p_{j}}|E(\tilde{R}_{i})|\\
&\leq 2\int_{\mathbb{R}^{n}}(\mathcal{M}_{m}[f_{j}W_{j}^{1/p_{j}}])^{p_{j}}dy\\
&\leq C\int_{\mathbb{R}^{n}}|f_{j}|^{p_{j}}W_{j}dx,
\end{align*}
Combining them together with (4.4) ,we have
$$\nu_{\vec{\omega}}\left(\bigcup^{M}_{i=1}R_{i}\right)^{1/p}\leq \frac{C}{t^{2}}\prod^{2}_{j=1}\left(\int_{\mathbb{R}^{n}}|f_{j}|^{p_{j}}W_{j}dx\right).$$
where we have used (\ref{C}), and the $L^{p}$-boundedness of $\mathcal{M}_{m}$. Altogether, we obtain the desired result.
\end{proof}

\end{document}